\documentclass[11pt]{amsart}
\usepackage[T1]{fontenc}
\usepackage{lmodern}
\usepackage{amsmath,amssymb,amsthm,mathrsfs,comment,colonequals}
\usepackage{tikz}
\usetikzlibrary{matrix,arrows,decorations.pathmorphing,cd,positioning,calc}
\usepackage{enumitem}
\usepackage{hyperref}
\hypersetup{colorlinks=true,linkcolor=blue,citecolor=blue,urlcolor=blue,
 pdftitle={Local-Cohen-Macaulay DG-rings with unequal finitistic projective dimensions},pdfauthor={Luigi Ferraro}}
\usepackage{cleveref}
\usepackage{microtype}
\numberwithin{equation}{section}
\newtheorem{theorem}{Theorem}[section]

\theoremstyle{definition}
\newtheorem*{ack}{Acknowledgements}

\theoremstyle{remark}

\crefname{lem}{lemma}{lemmas}
\Crefname{lem}{Lemma}{Lemmas}
\crefname{prop}{proposition}{propositions}
\Crefname{prop}{Proposition}{Propositions}
\crefname{proposition}{proposition}{propositions}
\Crefname{proposition}{Proposition}{Propositions}
\crefname{cor}{corollary}{corollaries}
\Crefname{cor}{Corollary}{Corollaries}
\crefname{corollary}{corollary}{corollaries}
\Crefname{corollary}{Corollary}{Corollaries}
\crefname{defn}{definition}{definitions}
\Crefname{defn}{Definition}{Definitions}
\crefname{definition}{definition}{definitions}
\Crefname{definition}{Definition}{Definitions}

\DeclareMathOperator{\Ext}{Ext}

\DeclareMathOperator{\pd}{\operatorname{projdim}}

\DeclareMathOperator{\RHom}{\mathbf{R}Hom}
\newcommand{\lotimes}{\otimes^{\mathbf L}}

\newcommand{\m}{\mathfrak m}

\DeclareMathOperator{\FPD}{FPD}
\DeclareMathOperator{\fpd}{fpd}
\DeclareMathOperator{\amp}{amp}
\newcommand{\D}{\mathsf D}
\newcommand{\RG}{\mathbf R\Gamma}

\subjclass[2020]{13D05, 13H10, 16E45}

\begin{document}
\title[Finitistic dimensions of local-Cohen-Macaulay DG-rings]{Local-Cohen-Macaulay DG-rings with unequal finitistic projective dimensions}
\author[Ferraro]{Luigi Ferraro}
\address[Ferraro]{School of Mathematical and Statistical Sciences \\ University of Texas Rio Grande Valley \\ Edinburg, TX 78539, U.S.A}
\email[]{luigi.ferraro@utrgv.edu}
\date{}
\keywords{DG-ring, finitistic projective dimension, local-Cohen-Macaulay DG-ring, trivial extension}
\begin{abstract}
For every pair of integers $d,n\geq1$, we construct a commutative
noetherian local-Cohen-Macaulay DG-ring $A$ such that
$\dim H^0(A)=d$, $\amp(A)=n$, $\fpd(A)=d-n$, and $\FPD(A)=d$.
This gives a negative answer to the implication from
local-Cohen-Macaulayness to equality of the two finitistic projective
dimensions in a question of Bird, Shaul, Sridhar, and Williamson.
\end{abstract}
\maketitle

\section{Introduction}

The small and big finitistic projective dimensions of a ring
measure the projective dimensions of finitely generated and
arbitrary modules of finite projective dimension, respectively.
For a commutative noetherian local ring $R$, classic results
give
\[
\fpd(R)=\operatorname{depth}R
\qquad\text{and}\qquad
\FPD(R)=\dim R;
\]
see \cite[\S 1]{BSSW}. Consequently, these two invariants
coincide if and only if $R$ is Cohen-Macaulay.

Bird, Shaul, Sridhar, and Williamson studied the corresponding
invariants for commutative nonpositive noetherian DG-rings
with bounded cohomology. For such a DG-ring $A$, with
$d=\dim H^0(A)<\infty$ and $n=\amp(A)$, they proved that
\[
d-n\leq \FPD(A)\leq d.
\]
When $A$ is local, they also showed that $A$ is
local-Cohen-Macaulay if and only if $\fpd(A)=d-n$;
see \cite[Theorem~7.2 and Corollary~7.6]{BSSW}.
They asked \cite[Remark~7.7]{BSSW} whether the classic characterization extends
to this setting: does $\fpd(A)=\FPD(A)$ hold if and only
if $A$ is local-Cohen-Macaulay?

In this work, we show, by generalizing an example of Shaul \cite[Example 7.2]{ShaulCM}, that local-Cohen-Macaulayness does
not imply equality of the two finitistic projective dimensions.
For every field $k$ and every pair of integers $d,n\geq 1$,
we consider the trivial extension
\[
R=\frac{k[[x,y,z_1,\ldots,z_{d-1}]]}{(xy)},
\qquad
A=R\ltimes \bigl(R/(x)\bigr)[n].
\]
We prove that $A$ is local-Cohen-Macaulay and satisfies
\[
\dim H^0(A)=d,\qquad
\amp(A)=n,\qquad
\fpd(A)=d-n,\qquad
\FPD(A)=d.
\]
Thus the upper bound for the big finitistic projective
dimension can be attained even among local-Cohen-Macaulay
DG-rings, and the difference $\FPD(A)-\fpd(A)$ can be
arbitrarily large.

\begin{ack}
The author was partly supported by the Simons Foundation grant MPS-TSM-00007849.
\end{ack}

\section{Background}

In this section we collect the conventions and background used throughout the paper.

\subsection*{DG-rings and DG-modules}
A DG-ring is a graded ring $A=\bigoplus_{i\in\mathbb Z}A^i$
with a differential $\partial$ of degree $1$ satisfying
$\partial^2=0$ and
\[
\partial(ab)=\partial(a)b+(-1)^{|a|}a\partial(b)
\]
for homogeneous elements $a,b\in A$. We use commutativity in the
graded sense: $ab=(-1)^{|a||b|}ba$, and $a^2=0$ when $|a|$ is odd.
The DG-ring $A$ is nonpositive if $A^i=0$ for $i>0$.

A nonpositive DG-ring $A$ is noetherian if $H^0(A)$ is noetherian
and each $H^i(A)$ is finitely generated over $H^0(A)$; it is local
if $H^0(A)$ is local. Throughout, $A$ is a commutative nonpositive
noetherian local DG-ring with bounded cohomology and $H^0(A)\neq0$.
We write $R=H^0(A)$ and denote its maximal ideal by $\m$.

A DG $A$-module is a graded $A$-module $M$ with a differential
of degree $1$ satisfying $\partial^2=0$ and
\[
\partial(am)=\partial(a)m+(-1)^{|a|}a\partial(m).
\]
We regard $R$-modules as DG $A$-modules concentrated in degree
zero via the canonical map $A\to R$.

\subsection*{The derived category}
We write $\D(A)$ for the derived category of DG $A$-modules,
obtained by inverting quasi-isomorphisms, and use $\simeq$ for
isomorphisms in $\D(A)$. For $M\in\D(A)$, set
\[
\inf M=\inf\{i:H^i(M)\neq0\},\qquad
\sup M=\sup\{i:H^i(M)\neq0\},\qquad
\amp M=\sup M-\inf M.
\]
For an acyclic DG-module, these values are $\infty$, $-\infty$,
and $-\infty$, respectively. Our shift convention is
\[
H^i(M[n])=H^{i+n}(M).
\]

The full subcategory $\D^b(A)$ consists of the DG-modules with
bounded cohomology. We write $\D_f(A)$ for the full subcategory
of objects whose cohomology is finitely generated over $R$
in every degree, and set
\[
\D_f^b(A)=\D_f(A)\cap\D^b(A).
\]

\subsection*{Derived functors and $K$-flat DG-modules}
The derived tensor product and derived Hom are denoted by
$-\lotimes_A-$ and $\RHom_A(-,-)$. Thus
\[
\Ext_A^i(M,L)=H^i\bigl(\RHom_A(M,L)\bigr).
\]
A DG $A$-module $F$ is \emph{$K$-flat} if $X\otimes_A F$ is
acyclic for every acyclic DG $A$-module $X$. Every DG-module
admits a $K$-flat resolution. If $F\to M$ is such a resolution,
then
\[
M\lotimes_A L\simeq F\otimes_A L.
\]

\subsection*{Koszul DG-modules and localization}
Let $\mathbf x=x_1,\ldots,x_r$ be a sequence in $R$, and choose
lifts $\widetilde{\mathbf x}=\widetilde x_1,\ldots,\widetilde x_r$
in $A^0$. We write
\[
K(A;\mathbf x)
=
A\otimes_{A^0}K(A^0;\widetilde{\mathbf x}).
\]
Its isomorphism class in $\D(A)$ is independent of the chosen
lifts. For the empty sequence, we set $K(A;\varnothing)=A$.

The Koszul DG-module is
$K$-flat over $A$ by \cite[proof of Proposition~2.12]{ShaulKoszul}. More generally, if $B$ is a DG $A$-algebra
that is $K$-flat over $A$, then every finite Koszul DG-module
over $B$ is $K$-flat over $A$.

For $s\in A^0$, let
\[
A_s=A\otimes_{A^0}(A^0)_s
\]
be the DG localization. For every DG $A$-module $M$,
\[
H^i(M\otimes_A A_s)\cong H^i(M)_s.
\]
Consequently, $A_s$ is $K$-flat over $A$, and so is
$K(A_s;\mathbf x)$ for every finite sequence $\mathbf x$
in $H^0(A_s)$.

\subsection*{Projective and finitistic dimensions}
For $M\in\D(A)$, we use the projective dimension
\[
\pd_A M
=
\inf\left\{
q\in\mathbb Z
\ \middle|\
\begin{array}{l}
\Ext_A^i(M,L)=0\text{ for every }L\in\D^b(A)\\
\text{and every }i>q+\sup L
\end{array}
\right\};
\]
see \cite[Definition~2.1]{BSSW}. The infimum of the empty set
is $\infty$.

Following \cite[Definition~7.1]{BSSW}, the small and big finitistic
projective dimensions of $A$ are
\begin{align*}
\fpd(A)
&=
\sup\{\pd_A M+\inf M:
      0\not\simeq M\in\D_f^b(A),\ \pd_A M<\infty\},\\
\FPD(A)
&=
\sup\{\pd_A M+\inf M:
      0\not\simeq M\in\D^b(A),\ \pd_A M<\infty\}.
\end{align*}

\subsection*{Local cohomology and Cohen-Macaulayness}
Choose a finite sequence $\widetilde{\mathbf x}$ in $A^0$
whose image generates $\m$. For $t\in A^0$, let
\[
\check C(A^0;t)=[A^0\longrightarrow(A^0)_t]
\]
be the augmented \v{C}ech complex in degrees $0$ and $1$.
The augmented \v{C}ech complex on a sequence is the tensor
product of the corresponding one-element complexes.
Derived local cohomology is computed by
\[
\RG_{\m}(M)
\simeq
\check C(A^0;\widetilde{\mathbf x})\otimes_{A^0}M.
\]
This construction is independent of the chosen lifts and
generators; see \cite{ShaulCM}.

Following \cite{ShaulCM}, the DG-ring $A$ is
\emph{local-Cohen-Macaulay} if
\[
\amp\bigl(\RG_{\m}(A)\bigr)=\amp(A).
\]

\subsection*{Trivial extensions}

Let $B$ be a commutative DG-ring and let $M$ be a DG
$B$-module. The \emph{trivial extension} of $B$ by $M$,
denoted by $B\ltimes M$, has underlying graded module
\[
(B\ltimes M)^i=B^i\oplus M^i
\]
and differential
\[
d(b,m)=(d_B(b),d_M(m)).
\]
For homogeneous elements $(b,m)$ of degree $i$ and $(c,n)$
of degree $j$, multiplication is defined by
\[
(b,m)(c,n)
=
\bigl(bc,\;bn+(-1)^{ij}cm\bigr).
\]
With these operations and unit $(1,0)$, the trivial extension
is a commutative DG-ring. It is non-positive whenever both
$B$ and $M$ are concentrated in non-positive degrees.

\section{The construction}
In this section we generalize \cite[Example 7.2]{ShaulCM} and show that the resulting DG-rings are local-Cohen–Macaulay but have unequal finitistic projective dimensions.
\begin{theorem}\label{thm:main}
Let $k$ be a field and let $d,n\geq1$. If
\[
R=\frac{k[[x,y,z_1,\ldots,z_{d-1}]]}{(xy)},\qquad
N=R/(x),\qquad A=R\ltimes N[n],
\]
then $A$ is a commutative noetherian local-Cohen-Macaulay DG-ring with
\[
\dim H^0(A)=d,\qquad \amp(A)=n,\qquad
\fpd(A)=d-n,\qquad \FPD(A)=d.
\]
In particular, $\FPD(A)-\fpd(A)=n$.
\end{theorem}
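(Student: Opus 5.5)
First we dispose of the formal properties. Since $R$ and $N$ are concentrated in degree $0$, $A$ is a graded ring with zero differential, $H^0(A)=R$ and $H^{-n}(A)=N$, and all other cohomology vanishes. Hence $A$ is nonpositive, commutative, noetherian and local with bounded cohomology, $\amp(A)=n$, and $\dim H^0(A)=\dim R=(d+2)-1=d$.

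For local-Cohen-Macaulayness, note that $A\simeq R\oplus N[n]$ as DG $R$-modules, and so
\[
\RG_{\m}(A)\simeq\RG_{\m}(R)\oplus\RG_{\m}(N)[n].
\]
The ring $R$ is a hypersurface, hence Cohen-Macaulay of dimension $d$, so $\RG_{\m}(R)$ is concentrated in degree $d$. Also $N\cong k[[y,z_1,\ldots,z_{d-1}]]$ is regular of dimension $d$, so $\RG_{\m}(N)$ is concentrated in degree $d$, and $\RG_{\m}(N)[n]$ is concentrated in degree $d-n$. Both summands are nonzero, so $\amp\RG_{\m}(A)=n=\amp(A)$. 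By \cite[Corollary~7.6]{BSSW} this gives $\fpd(A)=d-n$.

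For $\FPD(A)$, the bound $\FPD(A)\le d$ holds by \cite[Theorem~7.2]{BSSW}. It therefore suffices to find $0\not\simeq M\in\D^b(A)$ with $\pd_A M<\infty$ and $\pd_A M+\inf M=d$. Following Shaul's example, we localize at an element that kills $N$: $y$ annihilates nothing relevant, but $x$ kills $N$. Let $\tilde x\in A^0$ be the lift of $x$ and take
\[
M=K(A_x;z_1,\ldots,z_{d-1})\quad\text{or}\quad M=K(A_x;y,z_1,\ldots,z_{d-1}).
\]
Because $N_x=0$, $H(A_x)=R_x=k[[x,y,\mathbf z]]_x/(y)\cong k[[x,\mathbf z]]_x$, so $A_x\simeq R_x$, which is regular of dimension $d-1$. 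The DG-module $M$ is $K$-flat over $A$ (background facts), lies in $\D^b(A)$, and has $\inf M=-(\text{length of the sequence})$, provided the sequence is regular on $R_x$; for instance $M=K(A_x;\mathbf z)\simeq R_x/(\mathbf z)=k((x))$, with $\inf M=0$.

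The main obstacle is computing $\pd_A M$. We plan to use the adjunction
\[
\RHom_A(A_x\otimes K,L)\simeq\RHom_{A_x}(K(A_x;\mathbf z),L\otimes_A^{\mathbf L}\text{-}\ldots)
\]
together with the fact that $A_x$ has projective dimension $1$ over $A$. This holds because $A_x$ is a flat localization, and $(A^0)_x$ has a length-one resolution $0\to\bigoplus A^0\to\bigoplus A^0\to(A^0)_x\to0$ (the telescope), and it contributes exactly $1$. The Koszul complex on $d-1$ elements contributes $d-1$. Together these give $\pd_A M\le d$, and we aim to obtain equality by exhibiting $L$, namely a suitable $R$-module such as $k[[x,\mathbf z]]$ viewed via $R\to R/(y)$, with $\Ext^d_A(M,L)\neq0$: $\Ext^1$ of $R_x$ against $R/(y)$ is nonzero, being the classical computation $\Ext^1_R(R_x,R/(y))\ne0$ because $R/(y)$ is $x$-adically complete, and this is then combined with the top Koszul $\Ext^{d-1}$. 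For the upper bound in the DG setting we will check $\Ext^i_A(M,L)=0$ for $i>d+\sup L$ by a way-out/truncation argument reducing to $L$ an $R$-module. We expect the delicate point to be this lower bound on $\pd$, which must use completeness in the $x$ direction, so the power series setting is essential.
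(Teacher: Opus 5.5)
Most of your outline is sound. Computing $\RG_\m(A)\simeq\RG_\m(R)\oplus\RG_\m(N)[n]$ directly is a fine substitute for citing \cite[Theorem~7.3]{ShaulCM}; note only that the power series ring has $d+1$ variables, so $\dim R=(d+1)-1$, not $(d+2)-1$. Your witness $M=K(A_x;\mathbf z)\simeq k((x))$ is exactly the paper's. The bound $\pd_A M\le d$ also holds. The cleanest reason is that $A\otimes_R\bigl(K(R;\mathbf z)\otimes_R T\bigr)$, with $T$ the two-term telescope resolving $R_x$, is a semi-free resolution of $M$ built from free $R$-modules in degrees $-d,\dots,0$; no adjunction formula is needed.

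The genuine gap is the lower bound $\pd_A M\ge d$, and your mechanism runs the wrong way. For an $R$-module $L$ one has $\Ext^i_A(M,L)\cong\Ext^i_R(k((x)),L)$ and
\[
\RHom_R\bigl(k((x)),L\bigr)\simeq\RHom_R\bigl(R_x,\RHom_R(K(R;\mathbf z),L)\bigr).
\]
For your choice $L=R/(y)\cong k[[x,\mathbf z]]$, the inner complex is $k[[x]][-(d-1)]$, and $\RHom_R(R_x,k[[x]])=0$. Indeed, in the telescope complex a $1$-cocycle $(b_i)_{i\ge0}$ is the coboundary of $(a_i)$ with $a_i=\sum_{j\ge0}x^jb_{i+j}$, and this series converges \emph{because} $k[[x]]$ is $x$-adically complete. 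So completeness forces $\Ext^1_R(R_x,-)=0$, and in fact $\Ext^i_A(M,R/(y))=0$ for every $i$.

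The problem is not specific to this $L$. Finitely generated $R$-modules are $x$-adically complete and separated, so $\RHom_R(R_x,-)$ kills them. It therefore kills the bounded complex $\RHom_R(K(R;\mathbf z),L)$ whenever $L$ is finitely generated, and no finitely generated test module can detect $\pd_A M$.

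To close the gap you need a test module that is not $x$-adically complete. For example, take $L=\bigoplus_{i\in\mathbb N}k[[x]]$, viewed as an $R$-module via $R\to R/(y,\mathbf z)$. Since $\mathbf zL=0$, we get $\Ext^d_R(k((x)),L)\cong\Ext^1_R(R_x,L)$. The cocycle $(b_i)=(e_i)$ is not a coboundary: any solution would satisfy $a_0=\sum_{j<n}x^je_j+x^na_n$ for all $n$. If $a_0$ is supported in coordinates $<m$ and $n>m$, then the $m$-th coordinate of $a_0$ equals $x^m\bigl(1+x^{n-m}(a_n)_m\bigr)\neq0$, a contradiction. Thus $\Ext^d_A(M,L)\neq0$ with $\sup L=0$, which gives $\pd_A M\ge d$.

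The paper avoids this computation entirely. It uses \cite[Corollary~2.3(i)]{BSSW} to get $\pd_A M=\pd_R k((x))$. It then applies \cite[Lemma~6.4]{BSSW} with $\mathfrak p=(y,z_1,\dots,z_{d-1})$ and $s=x$ to get $\pd_R k((x))=d$.
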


\begin{proof}
By construction $A$ is a commutative, noetherian and local DG-ring with amplitude $n$.

The ring $R$ is a Cohen-Macaulay ring of dimension $d$. Moreover, the $R$-module $N\cong k[[y,z_1,\ldots,z_{d-1}]]$
is Cohen-Macaulay of dimension $d$.
By \cite[Theorem 7.3]{ShaulCM}, the DG-ring $A$ is local-Cohen-Macaulay, and \cite[Corollary 7.6]{BSSW} gives
$\fpd(A)=d-n$.

To compute the big finitistic dimension, consider the DG-module
\[
P=K(A_x;z_1,\ldots,z_{d-1}),
\]
the Koszul DG-module of the sequence $z_1,\ldots,z_{d-1}$ over $A_x$.
Since $xN=0$, it follows that
$A_x=R_x$, which is therefore concentrated in degree zero. Moreover,
\[
R_x\cong k[[x,z_1,\ldots,z_{d-1}]][x^{-1}].
\]
The sequence $z_1,\ldots,z_{d-1}$ is regular in $R_x$. Thus the Koszul complex resolves the quotient,
giving
\begin{equation}\label{eq:witness}
P\simeq R_x/(z_1,\ldots,z_{d-1})R_x\cong k((x)),
\end{equation}
where $k((x))\colonequals k[[x]][x^{-1}]$. In particular, $P$ is nonzero and has cohomology only in degree zero,
so $\inf P=0$.

We recall that
\cite[Lemma~6.4]{BSSW} states that if $\mathfrak p$ is a prime ideal
in a commutative noetherian ring $T$ with $\dim(T/\mathfrak p)>0$,
$s\notin\mathfrak p$ and $(\mathfrak p,s)\neq T$, and if
$a_1,\ldots,a_r\in\mathfrak p$ form a regular sequence in $T_s$,
then
\[
\pd_T\bigl(T_s/(a_1,\ldots,a_r)T_s\bigr)=r+1.
\]
We apply this with
\[
T=R,\qquad \mathfrak p=(y,z_1,\ldots,z_{d-1}),\qquad
s=x,\qquad (a_1,\ldots,a_r)=(z_1,\ldots,z_{d-1}).
\]
Therefore
\[
\pd_R k((x))=\pd_R\bigl(R_x/(z_1,\ldots,z_{d-1})R_x\bigr)=d.
\]
The DG localization $A_x$ is $K$-flat over $A$ and therefore so is $P$. Thus,
\[
R\lotimes_A P\simeq K(R_x;z_1,\ldots,z_{d-1})\simeq k((x)).
\]
Therefore, \cite[Corollary 2.3(i)]{BSSW} and the preceding calculation give
\[
\pd_A P=\pd_R k((x))=d.
\]
It follows that
\[
d=\pd_A P+\inf P\leq\FPD(A)\leq\dim R=d,
\]
where the first inequality is by definition, and the second by \cite[Theorem 7.2]{BSSW}.
Hence $\FPD(A)=d$, completing the proof.
\end{proof}

\bibliographystyle{amsplain}
\bibliography{biblio}
\end{document}